\newtheorem{theorem}{Theorem}[section]
\newtheorem{lemma}[theorem]{Lemma}
\newtheorem{proposition}[theorem]{Proposition}
\newtheorem{corollary}[theorem]{Corollary}
\theoremstyle{definition}
\newtheorem{definition}[theorem]{Definition}
\theoremstyle{remark}
\newtheorem{remark}[theorem]{Remark}
\numberwithin{equation}{section}
\newcommand{\R}{\mathbb{R}}
\begin{document}
\setcounter{page}{1}

\title[Hermite-Hadamard for Strongly Harmonic Convex Set-Valued Functions]{Some Hermite-Hadamard Inequalities for Strongly Harmonic Convex Set-Valued Functions}

\author[Gabriel Santana, Maira Valera-L\'opez]{Gabriel Santana$^1$, Maira Valera-L\'opez$^{1*}$}
\address{$^{1}$ Escuela de Matem\'atica,  Facultad de Ciencias, Universidad Central de Venezuela, Caracas 1010, Venezuela.}
\email{\textcolor[rgb]{0.00,0.00,0.84}{gaszsantana@gmail.com}}
\email{\textcolor[rgb]{0.00,0.00,0.84}{maira.valera@ciens.ucv.ve}}

\subjclass[2010]{Primary: 26D15; Secondary: 26D99, 26A51, 39B62, 46N10 .}
\keywords{strongly harmonic convex set-valued functions modulus $c>0$, harmonically convex maps, Hermite-Hadamard inequalities, set-valued functions, convex analysis}

\date{Received: xxxxxx; Revised: yyyyyy; Accepted: zzzzzz.
\newline \indent $^{*}$ Corresponding author}

\begin{abstract}
	This research aimed to explore some new Hermite-Hadamard inequalities for strongly harmonic convex set-valued functions with modulus $c>0$ introduced by G. Santana \cite{Santana2018}. 
\end{abstract} \maketitle

\section{Introduction}

The notion of set-valued functions arises in 1963, when C. Berge in \cite{Berge1963}, in the work titled \textit{Topological space: Including a treatment of multi-valued functions, vector spaces and convexity}, introduced the concept of upper and lower limit of set successions and is motivated by its applications in differential and integral analysis, in optimization theory and the calculation of variations, among others (see \cite{Merentes2013}). 

Recently, different researchers have been studied for different notions of set-valued convexity functions as well as have been used to find the error in some inclusion problems with set restrictions (see \cite{Geletu2006,Lara2019,Leiva2013,Matkowski1998,Narvaes2011,Nikodem1987,Nikoden1989,Nikodem2003,Nikodem2014}). One research that give us a important result on this dynamic field is the paper by O. Mejia \textit{et al.} in \cite{Mejia2014} extended the definition of strongly convexity of set-valued maps given by H. Hung (see \cite{Huang20101,Huang20102}) and introduced strong concavity for set-valued functions.

Motivated by this and others research in this mathematics field, and inspired by the work of Noor M. A. \textit{et. al.} \cite{Noor2016} we derive some Hermite-Hadamard inequalities for strongly harmonic set-valued convex functions, introduced by G. Santana in \cite{Santana2018}. 

\section{Preliminary}

First of all we must introduce the classes of harmonic convex set as follow:

\begin{definition}{\cite{Noor2016}}
	Let $X$ be a linear space. A nonempty subset $D$ of $X$ is said harmonic convex, if for all $x,y\in D$ and $t\in [0,1]$, we have:
	
	$$\frac{xy}{tx+(1-t)y}\in D.$$ 
\end{definition}

\noindent In \cite{Iscan2014}, \.{I}. \.{I}scan introduced a new variation of convexity called harmonically convex function and defines it as a real function $f: D\rightarrow \R$ that for all $x,y\in D$ and $t\in [0,1]$,  satisfies the following expression:

\begin{equation}\label{Pre-eq1}
f\left(\frac{xy}{tx+(1-t)y}\right)\leq tf(y)+(1-t)f(x).
\end{equation}

If in (\ref{Pre-eq1}) the sense of inequality is changed, it is obtained that $f$ is harmonically concave.

In the same way, M. A. Noor \textit{et al.} (see \cite{Noor2016}) extends this definition to strongly convexity.

\begin{definition}{\cite{Noor2016}}
	A  function $f:D\subset \R\backslash\{0\} \rightarrow \R$ is  said  to  be  strongly  harmonic convex function with modulus $c >0$, if
	\begin{equation}\label{Pre-eq2}
	f\left(\frac{xy}{tx+(1-t)y}\right)\leq tf(y)+(1-t)f(x)-ct(1-t)\left\|\frac{x-y}{xy}\right\|^{2}.
	\end{equation}
	The function $f$ is said to be strongly harmonic mid-convex function with modulus $c>0$, if (\ref{Pre-eq2}) is assumed only for $t =\frac{1}{2}$, that is:
	$$f\left(\frac{2xy}{x+y}\right) \leq \frac{f(x)+f(y)}{2}-\frac{c}{4}\left\|\frac{x-y}{xy}\right\|^{2}.$$
\end{definition} 

\begin{remark}
	In the following $B$ denote the closed unit ball of $Y$ and $c>0$ a scalar. Also, we consider $X,Y$ linear spaces and we denote $c(Y),cc(Y), n(Y)$ as the convex, compact convex and not empty subsets of $Y$, respectively.
\end{remark}

In this sense, G. Santana \textit{et. al.}  in  \cite{Santana2018} the definition of strongly harmonic convex function and set-valued function are combined, then introduced the strongly harmonic convex set-valued function with modulus $c>0$ as follows:

\begin{definition}{\cite{Santana2018}}
	Let $ X, Y $ be two linear spaces and $ D \subset X $  a harmonically convex set. A set-valued function $ F: D \rightarrow n(Y) $ is called \textit{strongly harmonic convex with modulus $c>0$} if:
	
	\begin{equation}\label{Def1}
	tF(y)+(1-t)F(x) + ct(1-t)\left\|\frac{x-y}{xy}\right\|^{2}B \subset F\left(\frac{xy}{tx+(1-t)y}\right) .
	\end{equation}
	for all $x, y \in D $,  $ t \in [0,1] $ and $B$ a unitary ball of $Y$.
	
	\noindent If we consider $c= 0$ in (\ref{Def1}), $F$ is a  \textit{harmonic convex set-valued function} and considering $t=1/2$ in (\ref{Def1}), them $F$ is a \textit{strongly harmonic midconvex set-valued function with modulus $c>0$}, if: 	
	
	\begin{equation*}
	\frac{F(y)+F(x)}{2} + \frac{c}{4}\left\|\frac{x-y}{xy}\right\|^{2}B \subset F\left(\frac{2xy}{x+y}\right).
	\end{equation*}
\end{definition}

\begin{definition}{\cite{Latif2012}}
	A function $f : [a, b] \subset \R \backslash \{0\} \rightarrow \R$ is said to be harmonic symmetric with respect to $\frac{2ab}{a+b}$, if
	\[f(x) = f \left(\frac{abx}{(a + b)x-ab}\right),\]
	for all $x \in [a, b].$
\end{definition}

Given this definition for functions, we introduce the analog version for set-valued functions in the following definition.

\begin{definition}
	A set-valued function $F : D \subset X \backslash \{0\} \rightarrow n(Y)$ is said to be harmonic symmetric with respect to $\frac{2ab}{a+b}$, if
	\[F(x) = F \left(\frac{abx}{(a + b)x-ab}\right),\]
	holds for all $ x \in D.$
\end{definition}

We introduce the following result which give some relationship between strongly harmonic convex set-valued function with modulus $c>0$ and harmonic convex set-valued function.

\begin{lemma}\begin{itemize}
		\item[i)] A set-valued function $F: D\subset X \rightarrow n(Y)$ is strongly harmonic convex with modulus $c>0$, if and only if, the set-valued function $G(x)= F(x)+c\left\|\frac{1}{x}\right\|^{2}B$ is harmonic convex.
		\item[ii)] A set-valued function $F: D\subset X \rightarrow n(Y)$ is strongly harmonic mid-convex modulus $c>0$, if and only if, the set-valued function $G(x)= F(x)+c\left\|\frac{1}{x}\right\|^{2}B$ is harmonic mid-convex.
	\end{itemize}
\end{lemma}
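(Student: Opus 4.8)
The plan is to reduce the stated equivalence to a single algebraic identity for the squared norm together with elementary Minkowski set arithmetic, handling the two implications by adding, respectively cancelling, a common ball term.

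First I would fix $x,y\in D$, $t\in[0,1]$ and abbreviate $z=\frac{xy}{tx+(1-t)y}$. The two relations
\[
\frac{1}{z}=t\,\frac{1}{y}+(1-t)\frac{1}{x},\qquad \frac{x-y}{xy}=\frac{1}{y}-\frac{1}{x}
\]
follow directly from the definition of $z$. With these in hand, the crux is the identity
\[
t\left\|\frac{1}{y}\right\|^{2}+(1-t)\left\|\frac{1}{x}\right\|^{2}-\left\|\frac{1}{z}\right\|^{2}=t(1-t)\left\|\frac{x-y}{xy}\right\|^{2},
\]
which is the classical strong-convexity identity for $\|\cdot\|^{2}$: writing $u=\frac1x$, $v=\frac1y$ it reduces to $t\|v\|^{2}+(1-t)\|u\|^{2}-\|tv+(1-t)u\|^{2}=t(1-t)\|u-v\|^{2}$ and is checked by expanding the square. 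I will also use the ball arithmetic $\alpha B+\beta B=(\alpha+\beta)B$ for scalars $\alpha,\beta\ge 0$, valid since $B$ is the unit ball of $Y$.

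For the implication $F$ strongly harmonic convex $\Rightarrow$ $G$ harmonic convex, I would start from the defining inclusion
\[
tF(y)+(1-t)F(x)+ct(1-t)\left\|\frac{x-y}{xy}\right\|^{2}B\subset F(z)
\]
and add the set $c\|\frac1z\|^{2}B$ to both sides; monotonicity of Minkowski addition preserves the inclusion. Collecting the ball terms on the left via the ball arithmetic and the identity above turns the left-hand side into $tG(y)+(1-t)G(x)$ and the right-hand side into $G(z)$, which is exactly harmonic convexity of $G$.

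For the converse, I would expand harmonic convexity of $G$, namely $tG(y)+(1-t)G(x)\subset G(z)$, into
\[
tF(y)+(1-t)F(x)+c\Big(t\big\|\tfrac1y\big\|^{2}+(1-t)\big\|\tfrac1x\big\|^{2}\Big)B\subset F(z)+c\big\|\tfrac1z\big\|^{2}B,
\]
and then use the identity to rewrite the left-hand ball term as $ct(1-t)\|\frac{x-y}{xy}\|^{2}B+c\|\frac1z\|^{2}B$. Both sides now carry the common summand $c\|\frac1z\|^{2}B$, and the target inclusion is obtained by cancelling it. This cancellation step is the main obstacle: Minkowski addition is not cancellative in general, so here one invokes the R\aa dstr\"{o}m cancellation lemma, which applies because $c\|\frac1z\|^{2}B$ is bounded and $F(z)$ is closed and convex — the latter being the reason one works with values in $cc(Y)$. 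Part (ii) is the specialization $t=\tfrac12$ of the same computation, with $\alpha=\beta=\tfrac12$ in every step, so no separate argument is required.
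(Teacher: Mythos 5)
Your proof is correct, and the forward implication is essentially the paper's own computation: both arguments add the term $c\left\|\frac{1}{z}\right\|^{2}B$ with $z=\frac{xy}{tx+(1-t)y}$ and rest on the same identity $t\|v\|^{2}+(1-t)\|u\|^{2}=\|tv+(1-t)u\|^{2}+t(1-t)\|u-v\|^{2}$ for $u=\frac{1}{x}$, $v=\frac{1}{y}$ (both you and the paper implicitly use that this identity holds, i.e.\ that the square expands as in an inner-product space). The genuine difference is the converse. The paper writes $F(z)=G(z)-c\left\|\frac{1}{z}\right\|^{2}B$ and then distributes this ``subtraction'' term by term across $tG(y)+(1-t)G(x)$; this treats Minkowski difference as the inverse of Minkowski addition, which is false in general --- for a symmetric ball, adding the reflected set gives $G(z)+c\left\|\frac{1}{z}\right\|^{2}B=F(z)+2c\left\|\frac{1}{z}\right\|^{2}B$, not $F(z)$. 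Your route avoids this defect: you keep the common summand $c\left\|\frac{1}{z}\right\|^{2}B$ on both sides of the inclusion and remove it by R\aa dstr\"om's cancellation lemma, which is the legitimate tool here, at the price of requiring $F(z)$ to be closed and convex (values in $cc(Y)$ suffice) --- a hypothesis not visible in the lemma's statement over $n(Y)$ but genuinely needed for cancellation to be valid. So your converse is more rigorous than the paper's and makes explicit an assumption that the paper's formal set algebra hides. Your reduction of part (ii) to the case $t=\frac{1}{2}$ is also fine; the paper instead repeats the whole computation at $t=\frac{1}{2}$, gaining nothing over your specialization.
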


\begin{proof}
	\begin{description}
		\item[i)] First, let's prove the sufficient condition.	Since $F: D\subset X \rightarrow n(Y)$ is a strongly harmonic set-valued function with modulus $c$, then for all $x,y \in D$, $t\in [0,1]$ and $B$ a unitary ball of $Y$, we have:
		\begin{eqnarray*}
			G\left(\frac{xy}{tx+(1-t)y}\right)&=& F\left(\frac{xy}{tx+(1-t)y}\right)+ c\left\|\frac{tx+(1-t)y}{xy}\right\|^{2}B\\
			&\supset& \left[tF(y)+(1-t)F(x) + ct(1-t)\left\|\frac{x-y}{xy}\right\|^{2}B\right]+ c\left\|\frac{tx+(1-t)y}{xy}\right\|^{2}B\\
			&=& \left[tF(y)+(1-t)F(x) + ct(1-t)\left\|\frac{1}{y}-\frac{1}{x}\right\|^{2}B\right]+ c\left\|\frac{t}{y}+\frac{1-t}{x}\right\|^{2}B\\
			&=&  tF(y)+(1-t)F(x) + ct(1-t)\left[\left\|\frac{1}{y}\right\|^{2}-\frac{2}{xy}+\left\|\frac{1}{x}\right\|^{2}\right]B\\ 
			& & \hspace{2.90cm} + c\left[\left\|\frac{t}{y}\right\|^{2}+\frac{2t(1-t)}{xy}+\left\|\frac{1-t}{x}\right\|^{2}\right]B\\
			&=& tF(y)+(1-t)F(x) + ct(1-t)\left[\left(\frac{1}{1-t}\right)\left\|\frac{1}{y}\right\|^{2}+\left(\frac{1}{t}\right)\left\|\frac{1}{x}\right\|^{2}\right]B\\
			&=& t\left(F(y)+ c\left\|\frac{1}{y}\right\|^{2}B\right)+ (1-t)\left(F(x)+ c\left\|\frac{1}{x}\right\|^{2}B\right)\\
			&=& tG(y)+(1-t)G(x),
		\end{eqnarray*}
		which gives that $G$ is harmonic convex set-valued function.
		
		Moreover, to demonstrate the necessary condition, if $G$ is harmonic convex set-valued function, then we have:
		\begin{eqnarray*}
			F\left(\frac{xy}{tx+(1-t)y}\right)&=& G\left(\frac{xy}{tx+(1-t)y}\right)- c\left\|\frac{tx+(1-t)y}{xy}\right\|^{2}B\\
			&\supset& [tG(y)+(1-t)G(x)]-c\left\|\frac{tx+(1-t)y}{xy}\right\|^{2}B\\
			&=& tG(y)+(1-t)G(x) - c\left[\left\|\frac{t}{y}\right\|^{2}+\frac{2t(1-t)}{xy}+\left\|\frac{1-t}{x}\right\|^{2}\right]B\\
			&=& t\left(G(y)-c\left\|\frac{1}{y}\right\|^{2}B\right)+ (1-t)\left(G(x)-c\left\|\frac{1}{x}\right\|^{2}B\right)\\
			& &+ ct(1-t)\left[\left\|\frac{1}{y}\right\|^{2}-\frac{2}{xy}+\left\|\frac{1}{x}\right\|^{2}\right]B\\
			&=& tF(y)+(1-t)F(x) + ct(1-t)\left\|\frac{x-y}{xy}\right\|^{2}B,
		\end{eqnarray*}
		which show that $F$ is strongly harmonic convex set-valued function with modulus $c>0$.
		
		\item[ii)]  To prove this item, we will start by demonstrate the validity of sufficiency. Since $F$ strongly harmonic mid-convex set-valued function with modulus $c>0$. Then:
		
		\begin{eqnarray*}
			G\left(\frac{2xy}{x+y}\right) &=& F\left(\frac{2xy}{x+y}\right) + c\left\|\frac{x+y}{2xy}\right\|^{2}B\\
			&\supseteq& \left[\frac{F(x)+F(y)}{2}+ \frac{c}{4}\left\|\frac{x-y}{xy}\right\|^{2}B\right]+ \frac{c}{4}\left\|\frac{x+y}{xy}\right\|^{2}B\\
			&=& \frac{F(x)+F(y)}{2} + \frac{c}{4}\left[2\left\|\frac{1}{y}\right\|^{2}+ 2\left\|\frac{1}{x}\right\|^{2}\right]B\\
			&=& \frac{F(x)+F(y)}{2}+ \frac{c}{2}\left\|\frac{1}{y}\right\|^{2}B+ \frac{c}{2}\left\|\frac{1}{x}\right\|^{2}B\\
			&=& \frac{G(x)+G(y)}{2},
		\end{eqnarray*}
		which gives that $G$ is a harmonic mid-convex set-valued function.
		
		 To the necessary implication we will consider that $G$ is harmonic mid-convex set-valued function. Then we have
		\begin{eqnarray*}
			F\left(\frac{2xy}{x+y}\right) &=& G\left(\frac{2xy}{x+y}\right) - c\left\|\frac{x+y}{2xy}\right\|^{2}B\\
			&\supseteq&  \frac{G(x)+G(y)}{2} -\frac{c}{4}\left[\left\|\frac{1}{y}\right\|^{2}+\frac{2}{xy}+\left\|\frac{1}{x}\right\|^{2}\right]B \\
			&=& \frac{G(x)-c\left\|\frac{1}{x}\right\|^{2}B}{2}+  \frac{G(y)-c\left\|\frac{1}{y}\right\|^{2}B}{2}+\frac{c}{4}\left[\left\|\frac{1}{y}\right\|^{2}-\frac{2}{xy}+\left\|\frac{1}{x}\right\|^{2}\right]B \\
			&=& \frac{F(x)+F(y)}{2}+ \frac{c}{4}\left\|\frac{x-y}{xy}\right\|^{2}B,
		\end{eqnarray*}
		which shows that $F$ is a strongly harmonic mid-convex set-valued function with modulus $c>0$.
 		
	\end{description}

\end{proof}	
	
The following theorem is used in this research for our results, this represent a Hermite-Hadamard inequalities for strongly convex set valued function, introduced by Nikoden N. \textit{et. al} in \cite{Nikodem2014}.

\begin{theorem}\cite{Nikodem2014}
	If a set-valued function $F: I\rightarrow cl(Y)$ is strongly convex with modulus $c>0$, then
	\begin{equation}\label{Pre-eq3}
	\frac{1}{b-a}\int_{a}^{b}F(x)dx + \frac{c}{12}(b-a)^{2}B \subset F\left(\frac{a+b}{2}\right),
	\end{equation}
	and 
	\begin{equation}\label{Pre-eq4}
	\frac{F(a)+F(b)}{2}+ \frac{c}{6}(b-a)^{2}B\subset \frac{1}{b-a}\int_{a}^{b}F(x)dx,
	\end{equation}
\end{theorem}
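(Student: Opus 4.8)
The plan is to obtain both inclusions by integrating the defining inclusion of strong convexity over a suitable one-parameter family of points, and then to invoke the basic properties of the set-valued (Aumann/Riemann) integral: it preserves inclusions, it is additive with respect to the Minkowski sum on convex-valued integrands, and it satisfies $\int_a^b \alpha(x)B\,dx = \left(\int_a^b \alpha(x)\,dx\right)B$ for a nonnegative scalar function $\alpha$. I first record that strong convexity forces every value $F(x)$ to be convex: taking $x=y$ in the defining inclusion $tF(y)+(1-t)F(x)+ct(1-t)\|x-y\|^{2}B\subset F(tx+(1-t)y)$ gives $tF(x)+(1-t)F(x)\subset F(x)$, and since the reverse inclusion is automatic, each $F(x)$ is convex. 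This guarantees that the integral properties above apply.

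For the left-hand inclusion \eqref{Pre-eq3} I would use the midpoint form of the definition (the case $t=\tfrac12$) applied to the symmetric pair $x$ and $a+b-x$, whose midpoint is $\frac{a+b}{2}$. This yields, for every $x\in[a,b]$,
\[
\frac{F(x)+F(a+b-x)}{2} + \frac{c}{4}(2x-a-b)^{2}B \subset F\!\left(\frac{a+b}{2}\right).
\]
Integrating over $[a,b]$, dividing by $b-a$, and using the substitution $x\mapsto a+b-x$ to identify $\int_a^b F(a+b-x)\,dx=\int_a^b F(x)\,dx$, the left-hand side collapses to $\frac{1}{b-a}\int_a^b F(x)\,dx$ plus the scalar-times-ball term $\frac{c}{4(b-a)}\left(\int_a^b(2x-a-b)^{2}\,dx\right)B$. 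A direct computation gives $\int_a^b(2x-a-b)^{2}\,dx=\frac{(b-a)^{3}}{3}$, so the ball term is exactly $\frac{c}{12}(b-a)^{2}B$, which is \eqref{Pre-eq3}.

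For the right-hand inclusion \eqref{Pre-eq4} I would instead use strong convexity along the segment joining the endpoints: setting $x=a$, $y=b$ in the definition gives, for every $t\in[0,1]$,
\[
tF(b)+(1-t)F(a)+ct(1-t)(b-a)^{2}B \subset F\big(ta+(1-t)b\big).
\]
Integrating in $t$ over $[0,1]$, the terms $\int_0^1 tF(b)\,dt$ and $\int_0^1(1-t)F(a)\,dt$ produce $\frac{F(a)+F(b)}{2}$, while $\int_0^1 t(1-t)\,dt=\frac{1}{6}$ turns the ball term into $\frac{c}{6}(b-a)^{2}B$; the change of variable $s=ta+(1-t)b$ converts the right-hand integral into $\frac{1}{b-a}\int_a^b F(s)\,ds$, which yields \eqref{Pre-eq4}.

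The scalar integrals here are immediate, so the elementary evaluations are not the difficulty. The step that requires the most care is justifying the interchange of integration with the inclusions, namely that the set-valued integral is monotone, additive on Minkowski sums, and commutes with the scalar-times-ball expression. This is precisely where the convexity of the values $F(x)$ established at the outset and the integrability of $F$ are essential, and it is the main technical point of the argument.
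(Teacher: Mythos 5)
First, a point of context: the paper does not prove this theorem at all --- it is stated as a preliminary and attributed to Nikodem, S\'anchez and S\'anchez \cite{Nikodem2014}, so there is no internal proof to compare yours against; you have in effect supplied the proof the paper omits. Judged on its own terms, your argument is correct. The opening observation that strong convexity forces each value $F(x)$ to be convex is right, and it is exactly what legitimizes the identities you later need, namely $\int_a^b F\left(\tfrac{a+b}{2}\right)dx=(b-a)F\left(\tfrac{a+b}{2}\right)$, $\tfrac{1}{2}\int F+\tfrac{1}{2}\int F=\int F$, and $\tfrac{1}{2}F(b)\subseteq\int_0^1 tF(b)\,dt$ (this last inclusion, which is the direction you need, follows from constant selections alone). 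The scalar evaluations $\int_a^b(2x-a-b)^2dx=\tfrac{(b-a)^3}{3}$ and $\int_0^1 t(1-t)\,dt=\tfrac16$ are correct, and the three integral properties you flag --- monotonicity, superadditivity over Minkowski sums (again, the direction you need: $\int A+\int B\subseteq\int(A+B)$), and $\int\alpha(x)B\,dx=\left(\int\alpha\right)B$ --- all hold by the standard selection argument: every point of the left-hand set is the integral of an explicit selection of the right-hand integrand. The only blemish is the harmless relabeling in the second part (you pair $tF(b)+(1-t)F(a)$ with $F(ta+(1-t)b)$ rather than $F(tb+(1-t)a)$), which is immaterial after integrating over $t\in[0,1]$.

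As for the route: your proof is genuinely different from the technique that this circle of results (and this paper itself, elsewhere) relies on. The paper's own lemma relating $F$ to $G(x)=F(x)+c\left\|\tfrac{1}{x}\right\|^{2}B$, and its Proposition 3.1, embody the standard reduction strategy: absorb the modulus term into a new set-valued map that is plainly convex, apply a previously known Hermite--Hadamard inclusion to that map, and then strip the ball term back off, which requires a cancellation principle of R\aa dstr\"{o}m type. Your proof instead integrates the defining inclusion directly --- over the symmetric family $\{x,\,a+b-x\}$ for \eqref{Pre-eq3} and along the segment $ta+(1-t)b$ for \eqref{Pre-eq4}. What this buys is a self-contained, elementary argument that needs neither a cancellation law nor any prior Hermite--Hadamard theorem; the price is that the burden shifts entirely onto the Aumann-integral properties you list at the end, which is indeed, as you say, the main technical point, but all of those properties are standard and verifiable by selections.
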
 
 for all $a,b \in I$, $a<b$ and $B$ is a unitary ball of $Y$.

\section{Main Results}

To start with the main results of this work which is to obtain some Hermite-Hadamard inequalities for functions, we need the following proposition. 

\begin{proposition}\label{prop_3_1}
	Let $D= [a,b] \subseteq X\backslash \{0\}$ and consider the function $G: \left[\frac{1}{b},\frac{1}{a}\right] \rightarrow n(Y)$ defined by $G(t) = F\left(\frac{1}{t}\right)$. Then $F$ is strongly harmonic convex set-valued function on $[a,b]$ with modulus $c>0$, if and only if, $G$ is strongly convex set-valued function on $\left[\frac{1}{b},\frac{1}{a}\right]$ with modulus $c>0$.
\end{proposition}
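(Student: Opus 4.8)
The plan is to prove the equivalence by the natural change of variables that turns the harmonic structure on $[a,b]$ into an affine structure on $\left[\frac{1}{b},\frac{1}{a}\right]$. Concretely, for $x,y\in[a,b]$ I set $u=\frac{1}{x}$ and $v=\frac{1}{y}$, so that $u,v\in\left[\frac{1}{b},\frac{1}{a}\right]$ and $x=\frac{1}{u}$, $y=\frac{1}{v}$; the map $x\mapsto 1/x$ is a bijection between the two intervals. By the very definition of $G$ we have $G(u)=F(1/u)=F(x)$ and $G(v)=F(y)$.

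First I would record the single identity that makes everything work: for $t\in[0,1]$,
\[
\frac{1}{\frac{xy}{tx+(1-t)y}}=\frac{tx+(1-t)y}{xy}=\frac{t}{y}+\frac{1-t}{x}=tv+(1-t)u,
\]
so that $\frac{xy}{tx+(1-t)y}=\frac{1}{tv+(1-t)u}$ and consequently
\[
G\!\left(tv+(1-t)u\right)=F\!\left(\frac{1}{tv+(1-t)u}\right)=F\!\left(\frac{xy}{tx+(1-t)y}\right).
\]
Second, I would match the modulus terms: since $\frac{x-y}{xy}=\frac{1}{y}-\frac{1}{x}=v-u$, it follows that $\left\|\frac{x-y}{xy}\right\|^{2}=\|u-v\|^{2}$.

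With these two observations in hand, the strongly harmonic convex inclusion (\ref{Def1}) for $F$,
\[
tF(y)+(1-t)F(x)+ct(1-t)\left\|\frac{x-y}{xy}\right\|^{2}B\subset F\!\left(\frac{xy}{tx+(1-t)y}\right),
\]
becomes, after substitution, exactly
\[
tG(v)+(1-t)G(u)+ct(1-t)\|u-v\|^{2}B\subset G\!\left(tv+(1-t)u\right),
\]
which is precisely the defining inclusion of strong convexity with modulus $c$ for $G$ (with the parameter $t$ paired to the endpoint $v$). Since the substitution $x=1/u$, $y=1/v$ is a bijection and every equality displayed above is reversible, reading the chain from right to left recovers the harmonic inequality for $F$ from the convexity inequality for $G$. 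This delivers both implications simultaneously and establishes the stated "if and only if".

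Because the argument is a direct change of variables, there is no genuine analytic obstacle; the work is entirely bookkeeping. The points that require care are verifying that the coefficient $t$ attaches to the correct endpoint under the bijection, so that the inclusion for $G$ emerges in the standard strong-convexity form rather than a relabelled variant; confirming that the quadratic weight $t(1-t)$ and the ball factor $B$ are carried through unchanged; and checking that each displayed step is an exact equality, so that the resulting equivalence is genuinely two-sided rather than a one-way inclusion.
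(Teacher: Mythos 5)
Your proof is correct and takes essentially the same approach as the paper: the substitution $u=\frac{1}{x}$, $v=\frac{1}{y}$ (the paper writes $x=\frac{1}{x_2}$, $y=\frac{1}{x_1}$) that converts the strongly harmonic convex inclusion for $F$ term by term into the strong convexity inclusion for $G$. The only difference is presentational — the paper writes out the two implications separately, whereas you observe that the substitution is bijective and every step is an exact equality, so both directions follow at once.
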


\begin{proof}
	 First, let's prove the sufficient condition.	
	 
	 Let $F$ be a strongly harmonic convex set-valued function with modulus $c> 0$. Then
	$$tF(y)+(1-t)F(x)+ct(1-t)\left\|\frac{x-y}{xy}\right\|^2 B\subseteq F\left(\frac{xy}{tx+(1-t)y}\right)$$
	for all $x,y\in [a,b]$, $t\in [0,1]$ and $c>0$.\\
	
	Now, consider $G(t)=F\left(\frac{1}{t}\right)$ and suppose that $x= \frac{1}{x_{2}}$ and $y= \frac{1}{x_{1}}$ in $[a,b]$. First, from the left side of the previous expression we have:
	
	\begin{eqnarray*}
		& & tF(y)+(1-t)F(x)+ct(1-t)\left\|\frac{x-y}{xy}\right\|^2 B\\
		&=& tF\left(\frac{1}{x_{1}}\right)+(1-t)F\left(\frac{1}{x_{2}}\right)+ct(1-t)\left\|\frac{\frac{1}{x_{2}}-\frac{1}{x_{1}}}{\frac{1}{x_{2}}\frac{1}{x_{1}}}\right\|^2 B\\
		&=& tG(x_{1})+ (1-t)G(x_{2})+ ct(1-t)\|x_{2}-x_{1}\|^2 B,
	\end{eqnarray*}   
     while on the right side we get:
	
	\begin{eqnarray*}
		F\left(\frac{\frac{1}{x_{2}}\frac{1}{x_{1}}}{t\frac{1}{x_{2}}+(1-t)\frac{1}{x_{1}}}\right) &=& F\left(\frac{1}{tx_{1}+(1-t)x_{2}}\right)\\
		&=& G(tx_{1}+(1-t)x_{2}).
	\end{eqnarray*}
	Accordingly,
	$$tG(x_{1})+ (1-t)G(x_{2})+ ct(1-t)\|x_{2}-x_{1}\|^2 B \subseteq G(tx_{1}+(1-t)x_{2})$$
	for all $x_{1},x_{2}\in \left[\frac{1}{b},\frac{1}{a}\right]$, $t\in [0,1]$ and $c> 0$.
	
	Thus $G$ is strongly convex set-valued function modulus $c$.
	
	Now, let's try the necessary condition.
	
	Let $G$ strongly convex set-valued function with modulus $c>0$, then:
	
	$$tG(x_{1})+ (1-t)G(x_{2})+ ct(1-t)\|x_{2}-x_{1}\|^2 B \subseteq G(tx_{1}+(1-t)x_{2}),$$	
	for all $x_{1},x_{2}\in \left[\frac{1}{b},\frac{1}{a}\right]$, $t\in [0,1]$ and $c> 0$.
	
	Whereas $G(t)=F\left(\frac{1}{t}\right)$, $x=\frac{1}{x_{2}}$ and $y=\frac{1}{x_{1}}$ in $\left[\frac{1}{b},\frac{1}{a}\right]$, from the left side of the previous expression we have:
	
	\begin{eqnarray*}
		& & tG\left(\frac{1}{x_{2}}\right)+(1-t)G\left(\frac{1}{x_{1}}\right)+ct(1-t)\left\|\frac{1}{x_{1}}-\frac{1}{x_{2}}\right\|^2 B\\
		&=& tF(x_{2})+ (1-t)F(x_{1})+ct(1-t)\left\|\frac{x_{2}-x_{1}}{x_{2}x_{1}}\right\|^2 B.
	\end{eqnarray*}  
	On the right side of contention we have:
	
	\begin{eqnarray*}
		G\left(t\frac{1}{x_{2}}+(1-t)\frac{1}{x_{1}}\right) &=& G\left(\frac{tx_{1}+(1-t)x_{2}}{x_{2}x_{1}}\right)\\
		&=& F\left(\frac{x_{2}x_{1}}{tx_{1}+(1-t)x_{2}}\right).
	\end{eqnarray*}   
	Then
	$$tF(x_{2})+ (1-t)F(x_{1})+ct(1-t)\left\|\frac{x_{2}-x_{1}}{x_{2}x_{1}}\right\|^2 B \subseteq F\left(\frac{x_{2}x_{1}}{tx_{1}+(1-t)x_{2}}\right),$$ 
	for all $x_{1},x_{2} \in [a,b]$, $t\in [0,1]$ and $c>0$.
	Thus $F$ is strongly harmonic convex set-valued function with modulus $c$.
	
\end{proof}

Now, using the Proposition \ref{prop_3_1}, we obtain a new result of Hermite-Hadamard inequality type for strongly harmonic convex set-valued functions. 

\begin{theorem}
	Let $D$ be a harmonic set of $X$ and $F: D\subset X\rightarrow n(Y)$ strongly harmonic convex set-valued function modulus $c>0$. If $F$ is Aumann integrable (see \cite{Aumann1965}), then:
	\begin{equation*}
	 \frac{ab}{b-a}\int_{a}^{b}\frac{F(x)}{x^2}dx + \frac{c}{12}\left\|\frac{b-a}{ab}\right\|^{2}B \subset F\left(\frac{2ab}{a+b}\right),
	\end{equation*} 
	and
	\begin{equation*}
	\frac{F(a)+F(b)}{2}+ \frac{c}{6}\left\|\frac{b-a}{ab}\right\|^{2}B\subset \frac{ab}{b-a}\int_{a}^{b}\frac{F(x)}{x^2}dx,
	\end{equation*}
	for all $a,b \in D$, $t\in [0,1]$ and $B$ a unitary ball of $Y$. 
\end{theorem}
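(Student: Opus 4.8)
The plan is to leverage Proposition~\ref{prop_3_1} to reduce the claim to the already-known Hermite--Hadamard inequalities for strongly convex set-valued functions, namely (\ref{Pre-eq3}) and (\ref{Pre-eq4}). First I would set $G(t)=F\left(\frac{1}{t}\right)$ on the interval $\left[\frac{1}{b},\frac{1}{a}\right]$. Since $F$ is strongly harmonic convex with modulus $c>0$ on $[a,b]$, Proposition~\ref{prop_3_1} guarantees that $G$ is strongly convex with the \emph{same} modulus $c$ on $\left[\frac{1}{b},\frac{1}{a}\right]$; moreover, Aumann integrability of $F$ transfers to $G$, so Nikodem's theorem is applicable to $G$.

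For the first inclusion I would apply (\ref{Pre-eq3}) to $G$ with endpoints $\alpha=\frac{1}{b}$ and $\beta=\frac{1}{a}$, obtaining
\begin{equation*}
\frac{1}{\beta-\alpha}\int_{\alpha}^{\beta}G(t)\,dt+\frac{c}{12}(\beta-\alpha)^{2}B\subset G\!\left(\frac{\alpha+\beta}{2}\right),
\end{equation*}
and then translate each piece back to $F$. The length is $\beta-\alpha=\frac{1}{a}-\frac{1}{b}=\frac{b-a}{ab}$, so the ball term becomes $\frac{c}{12}\left\|\frac{b-a}{ab}\right\|^{2}B$; the midpoint is $\frac{\alpha+\beta}{2}=\frac{a+b}{2ab}$, whence $G\!\left(\frac{\alpha+\beta}{2}\right)=F\!\left(\frac{2ab}{a+b}\right)$, which matches the claimed right-hand side. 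The integral is handled by the substitution $t=\frac{1}{x}$, $dt=-\frac{dx}{x^{2}}$: as $t$ runs from $\frac{1}{b}$ to $\frac{1}{a}$, $x$ runs from $b$ to $a$, so $\int_{1/b}^{1/a}F(1/t)\,dt=\int_{a}^{b}\frac{F(x)}{x^{2}}\,dx$, and dividing by $\beta-\alpha=\frac{b-a}{ab}$ produces the weight $\frac{ab}{b-a}$. Combining these three identifications yields exactly the first stated inclusion.

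The second inclusion follows by the same recipe from (\ref{Pre-eq4}): applying it to $G$ on $\left[\frac{1}{b},\frac{1}{a}\right]$ gives
\begin{equation*}
\frac{G(1/b)+G(1/a)}{2}+\frac{c}{6}(\beta-\alpha)^{2}B\subset\frac{1}{\beta-\alpha}\int_{\alpha}^{\beta}G(t)\,dt.
\end{equation*}
Since $G(1/b)=F(b)$ and $G(1/a)=F(a)$, the endpoint term becomes $\frac{F(a)+F(b)}{2}$, the ball term is $\frac{c}{6}\left\|\frac{b-a}{ab}\right\|^{2}B$, and the right-hand side is again $\frac{ab}{b-a}\int_{a}^{b}\frac{F(x)}{x^{2}}\,dx$ by the same change of variables, producing the second inclusion.

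The one step requiring genuine care --- and the main obstacle --- is the change of variables inside the Aumann integral: I must justify that the set-valued substitution $\int_{1/b}^{1/a}F(1/t)\,dt=\int_{a}^{b}x^{-2}F(x)\,dx$ is valid for Aumann integrals, and verify that the orientation reversal induced by the decreasing map $t\mapsto\frac{1}{t}$ is correctly absorbed into the positive weight $x^{-2}$ so that both endpoints and the sign come out right. Everything else is a direct transcription of Nikodem's inequalities through the bijection $t\leftrightarrow\frac{1}{x}$.
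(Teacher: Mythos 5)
Your proposal is correct and follows essentially the same route as the paper's own proof: both reduce the claim via $G(t)=F\left(\frac{1}{t}\right)$ and Proposition~3.1 to Nikodem's Hermite--Hadamard inequalities for strongly convex set-valued maps, then translate endpoints, midpoint, ball term, and the Aumann integral back through the substitution $t=\frac{1}{x}$ (the paper justifies the integral identity exactly as you anticipate, by passing to integrable selections $f\in F$ in Aumann's definition). No gaps; your handling of the orientation reversal in the change of variables matches the paper's computation.
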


\begin{proof}
	Let $G(t)= F\left(\frac{1}{t}\right)$ be  a strongly convex set-valued function modulus $c>0$, then $G$ satisfies (\ref{Pre-eq3}) and (\ref{Pre-eq4}) for all $x,y \in D$, i.e.:
	
	\begin{equation*}
		\frac{1}{y-x}\int_{x}^{y}G(t)dt + \frac{c}{12}(y-x)^{2}B \subset G\left(\frac{x+y}{2}\right),
	\end{equation*}
	and 
	\begin{equation*}
		\frac{G(x)+G(y)}{2}+ \frac{c}{6}(y-x)^{2}B\subset \frac{1}{y-x}\int_{x}^{y}G(t)dt,
	\end{equation*}
	for all $t \in [x,y]$. Now consider $G(t)= F\left(\frac{1}{t}\right)$ with $t\in \left[\frac{1}{b},\frac{1}{a}\right]$, then on the one hand, we take:  
	
	\begin{equation*}
		\frac{1}{\frac{1}{b}-\frac{1}{a}}\int_{\frac{1}{b}}^{\frac{1}{a}}F\left(\frac{1}{t}\right)dt + \frac{c}{12}\left(\frac{1}{a}-\frac{1}{b}\right)^{2}B \subset F\left(\frac{1}{\frac{\frac{1}{b}+\frac{1}{a}}{2}}\right).
	\end{equation*}  
	Thus,
    
    \begin{equation*}
    	\frac{ab}{b-a}\int_{\frac{1}{b}}^{\frac{1}{a}}F\left(\frac{1}{t}\right)dt + \frac{c}{12}\left\|\frac{b-a}{ab}\right\|^{2}B \subset F\left(\frac{2ab}{a+b}\right).
    \end{equation*}
    Then, by Aumann's definition, we take the following result:
	
	\begin{eqnarray*}
		\int_{\frac{1}{b}}^{\frac{1}{a}}F\left(\frac{1}{t}\right)dt &=& \left\{\int_{\frac{1}{b}}^{\frac{1}{a}}f\left(\frac{1}{t}\right)dt: f\in F\right\}\\
		&=& \left\{\int_{a}^{b}\frac{f(x)}{x^2}dx: f\in F\right\}\\
		&=& \int_{a}^{b}\frac{F(x)}{x^2}dx.
	\end{eqnarray*}
   	Therefore,
   	
   	\[\frac{ab}{b-a}\int_{a}^{b}\frac{F(x)}{x^2}dx + \frac{c}{12}\left\|\frac{b-a}{ab}\right\|^{2}B \subset F\left(\frac{2ab}{a+b}\right).\]
   	On the other hand, we have that:
   	
   	\[\frac{F\left(\frac{1}{\frac{1}{b}}\right)+ F\left(\frac{1}{\frac{1}{a}}\right)}{2}+ \frac{c}{6}\left\|\frac{b-a}{ab}\right\|^{2}B\subset \frac{ab}{b-a}\int_{a}^{b}\frac{F(x)}{x^2}dx,\]
   and this is,    
  
   	\[\frac{F(a)+F(b)}{2}+ \frac{c}{6}\left\|\frac{b-a}{ab}\right\|^{2}B\subset \frac{ab}{b-a}\int_{a}^{b}\frac{F(x)}{x^2}dx.\]
\end{proof}

\begin{theorem}\label{mr-th2}
	Let $F,G: D\rightarrow n(Y)$ be strongly harmonic convex set-valued function with modulus $c>0$. If $F,G$ are Aumann integrable, then:
	\begin{equation}\label{mr-eq1}
	\frac{1}{6}M(a,b)+\frac{1}{3}N(a,b)+S(a,b) \frac{c}{12}\left\|\frac{b-a}{ab}\right\|^{2}B + \frac{c^2}{30}\left\|\frac{b-a}{ab}\right\|^{4}B\subset \frac{ab}{b-a}\int_{a}^{b}\frac{F(x)G\left(\frac{abx}{(a+b)x-ab}\right)}{x^2}dx,
	\end{equation}
	where
	\[M(a,b)= F(a)G(a)+F(b)G(b),\]
	\[N(a,b)= F(a)G(b)+F(b)G(a),\]
	\[S(a,b)= F(a)+F(b)+G(a)+G(b),\]
\end{theorem}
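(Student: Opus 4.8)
The plan is to pull the whole statement back to the interval $[0,1]$ by the harmonic parametrization, apply strong harmonic convexity to $F$ and to $G$ separately, and then multiply and integrate the two resulting inclusions. Write $r=\left\|\frac{b-a}{ab}\right\|^{2}$ and set $x(t)=\frac{ab}{ta+(1-t)b}$, so that $\frac{1}{x(t)}=\frac{t}{b}+\frac{1-t}{a}$ and $x(t)$ runs from $a$ to $b$ as $t$ runs from $0$ to $1$. A direct computation with $\frac{1}{x^{*}}=\frac{a+b}{ab}-\frac{1}{x}$ shows that the harmonic symmetric point satisfies $x^{*}(t)=\frac{abx(t)}{(a+b)x(t)-ab}=x(1-t)$. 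Differentiating $\frac{1}{x}$ gives $\frac{dx}{x^{2}}=\frac{b-a}{ab}\,dt$, whence
\[
\frac{ab}{b-a}\int_{a}^{b}\frac{F(x)\,G\!\left(\frac{abx}{(a+b)x-ab}\right)}{x^{2}}\,dx=\int_{0}^{1}F(x(t))\,G(x^{*}(t))\,dt .
\]
So it suffices to bound the right-hand side from below in the sense of $\subset$.

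First I would apply the definition of strong harmonic convexity to $F$ with the pair $(x,y)=(a,b)$ and parameter $t$, and to $G$ with the same pair but parameter $1-t$ (since $x^{*}(t)=x(1-t)$), obtaining
\[
\mathcal{F}(t):=(1-t)F(a)+tF(b)+ct(1-t)\,rB\subset F(x(t)),
\]
\[
\mathcal{G}(t):=(1-t)G(b)+tG(a)+ct(1-t)\,rB\subset G(x^{*}(t)).
\]

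Next I would multiply these two inclusions. Since the Minkowski product preserves inclusions ($A\subset C$ and $A'\subset C'$ imply $A\cdot A'\subset C\cdot C'$), we get $\mathcal{F}(t)\cdot\mathcal{G}(t)\subset F(x(t))\cdot G(x^{*}(t))$ for every $t$, and monotonicity of the Aumann integral then yields $\int_{0}^{1}\mathcal{F}(t)\mathcal{G}(t)\,dt\subset\int_{0}^{1}F(x(t))G(x^{*}(t))\,dt$. It remains to expand the left integrand and integrate coefficientwise. Grouping the expansion of $\mathcal{F}(t)\mathcal{G}(t)$ into four blocks and using the Beta-type integrals $\int_{0}^{1}t(1-t)\,dt=\frac{1}{6}$, $\int_{0}^{1}t^{2}\,dt=\int_{0}^{1}(1-t)^{2}\,dt=\frac{1}{3}$, $\int_{0}^{1}t(1-t)^{2}\,dt=\int_{0}^{1}t^{2}(1-t)\,dt=\frac{1}{12}$ and $\int_{0}^{1}t^{2}(1-t)^{2}\,dt=\frac{1}{30}$, the pure endpoint block produces $\frac{1}{6}M(a,b)+\frac{1}{3}N(a,b)$, the two mixed blocks (endpoint value times ball) combine to $\frac{c}{12}\,r\,S(a,b)B$, and the ball-times-ball block produces $\frac{c^{2}}{30}\,r^{2}B$, which is precisely the left-hand side of \eqref{mr-eq1}.

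The main obstacle is the bookkeeping of the set operations rather than any single hard estimate. I must fix the convention that the product of set-valued functions is the Minkowski product and check that it is inclusion-monotone; handle the square ball term via $B\cdot B\subset B$ so that $c^{2}t^{2}(1-t)^{2}r^{2}\,B\cdot B\subset c^{2}t^{2}(1-t)^{2}r^{2}B$; and justify that the mixed terms $F(a)\cdot B$, $F(b)\cdot B$, $G(a)\cdot B$, $G(b)\cdot B$ assemble into the single set $S(a,b)\tfrac{c}{12}rB$. Finally, the interchange of the Aumann integral with the finite Minkowski sum and with the linear change of variables must be read off from the definition of the Aumann integral used earlier; granting these standard facts, the coefficientwise integration closes the argument.
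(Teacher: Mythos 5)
Your proposal follows the paper's proof essentially step for step: the same harmonic substitution $x(t)=\frac{ab}{ta+(1-t)b}$ reducing the weighted integral to $\int_0^1 F(x(t))G(x(1-t))\,dt$, the same application of strong harmonic convexity to $F$ at parameter $t$ and to $G$ at parameter $1-t$, and the same termwise integration with the Beta values $\frac13,\frac16,\frac1{12},\frac1{30}$, grouped exactly as in the paper into $\frac16 M+\frac13 N+\frac{c}{12}\left\|\frac{b-a}{ab}\right\|^{2}S(a,b)B+\frac{c^2}{30}\left\|\frac{b-a}{ab}\right\|^{4}B$.

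However, the way you phrase the final step leaves the chain of inclusions unable to close, and this is a genuine directional gap. What the theorem needs is
\[
\frac16 M(a,b)+\frac13 N(a,b)+\frac{c}{12}r\,S(a,b)B+\frac{c^2}{30}r^2B \;\subset\; \int_0^1\mathcal{F}(t)\mathcal{G}(t)\,dt ,
\]
that is, the integral of the product must \emph{contain} the integrated expansion. But the two facts you invoke point the opposite way: distributivity of the Minkowski product over Minkowski sums only gives $\mathcal{F}(t)\mathcal{G}(t)\subset\sum(\text{nine product terms})$, with strict inclusion possible even for compact convex sets (take $A_1=B_1=[-1,0]$, $A_2=B_2=[0,1]$; then $(A_1+A_2)(B_1+B_2)=[-1,1]$ while $A_1B_1+A_1B_2+A_2B_1+A_2B_2=[-2,2]$), and $B\cdot B\subset B$ likewise replaces the integrand by a larger set. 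So your argument, as written, establishes $\int\mathcal{F}\mathcal{G}\subset\mathrm{LHS}$ and $\int\mathcal{F}\mathcal{G}\subset\mathrm{RHS}$, from which $\mathrm{LHS}\subset\mathrm{RHS}$ does not follow. To close the argument you need the expansion step to hold with equality (or with $\supset$); the paper simply writes it as an equality, which is what makes its chain formally work, but that equality is precisely the delicate point — it fails for general Minkowski products, so some additional justification or hypothesis (e.g.\ one-dimensional, sign-restricted values, where interval arithmetic does distribute) is genuinely required there. In short: you take the same route as the paper, but your more honest bookkeeping exposes, and as stated falls into, a gap that the paper's proof silently glosses over.
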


\begin{proof}
	Let $F,G$ strongly harmonic convex set-valued functions with modulus $c>0$. Then:
	\begin{eqnarray*}
		\frac{ab}{b-a}\int_{a}^{b}\frac{F(x)G\left(\frac{abx}{(a+b)x-ab}\right)}{x^2}dx	&=& \int_{0}^{1}F\left(\frac{ab}{ta+(1-t)b}\right)G\left(\frac{ab}{(1-t)a+tb}\right)dt\\
		&\supseteq& \int_{0}^{1}\left[tF(b)+ (1-t)F(a)+ ct(1-t)\left\|\frac{b-a}{ab}\right\|^{2}B\right]\\
		&&\hspace{3cm}\left[tG(a)+ (1-t)G(b)+ ct(1-t)\left\|\frac{b-a}{ab}\right\|^{2}B\right]dt\\
		&=& F(a)G(a)\int_{0}^{1}(1-t)^{2}dt+ F(b)G(a)\int_{0}^{1} t^{2}dt\\
		& &\hspace{3cm} + [F(a)G(a)+F(b)G(b)]\int_{0}^{1}t(1-t)dt \\
		& &\hspace{3cm} + c\left\|\frac{b-a}{ab}\right\|^{2}B[F(a)+G(b)]\int_{0}^{1}t(1-t)^{2}dt\\
		& &\hspace{3cm} + c\left\|\frac{b-a}{ab}\right\|^{2}B [F(b)+G(a)] \int_{0}^{1} t^{2}(1-t)dt \\
		& &\hspace{3cm} + c^{2}\left\|\frac{b-a}{ab}\right\|^{4}B\int_{0}^{1}t^{2}(1-t)^{2}dt\\
		&=& \frac{F(a)G(b)+F(b)G(a)}{3}+ \frac{F(a)G(a)+F(b)G(b)}{6}\\
		& & +[F(a)+F(b)+G(a)+G(b)] \frac{c}{12}\left\|\frac{b-a}{ab}\right\|^{2}B+ \frac{c^2}{30}\left\|\frac{b-a}{ab}\right\|^{4}B\\
		&=& \frac{1}{6}M(a,b)+\frac{1}{3}N(a,b)+S(a,b) \frac{c}{12}\left\|\frac{b-a}{ab}\right\|^{2}B + \frac{c^2}{30}\left\|\frac{b-a}{ab}\right\|^{4}B,
	\end{eqnarray*}
which is the required result.
\end{proof}

\noindent If $F=G$ in theorem \ref{mr-eq1}, then it reduces to the following result.

\begin{corollary}
	Let $F: D\subset X \rightarrow n(Y)$ be strongly harmonic convex set-valued function with modulus $c>0$. If $F$ is Aumann integrable on $[a,b]$, then:
	\begin{equation*}
	\frac{2F(a)F(b)}{3}+ \frac{F^2(a)+F^2(b)}{6}+ \frac{c}{6}\left\|\frac{b-a}{ab}\right\|^{2}[F(a)+F(b)]+\frac{c^2}{30}\left\|\frac{b-a}{ab}\right\|^{4} \subseteq \frac{ab}{b-a}\int_{a}^{b}\frac{F(x)F\left(\frac{abx}{(a+b)x-ab}\right)}{x^2}dx.
	\end{equation*}
\end{corollary}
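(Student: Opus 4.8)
The plan is to obtain this corollary as the direct specialization of Theorem~\ref{mr-th2} to the case $G=F$. Every hypothesis of Theorem~\ref{mr-th2}, namely strong harmonic convexity with modulus $c>0$ together with Aumann integrability, is satisfied by taking both set-valued functions equal to the single function $F$. Hence the inclusion \eqref{mr-eq1} holds verbatim with $G$ replaced by $F$, and it remains only to evaluate the three auxiliary quantities $M(a,b)$, $N(a,b)$, and $S(a,b)$ under this identification and to substitute them into the left-hand side.

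First I would compute each quantity. With $G=F$ one reads off directly from the definitions displayed just after the statement of Theorem~\ref{mr-th2} that $M(a,b)=F(a)F(a)+F(b)F(b)=F^2(a)+F^2(b)$, that $N(a,b)=F(a)F(b)+F(b)F(a)=2F(a)F(b)$, and that $S(a,b)=F(a)+F(b)+F(a)+F(b)=2[F(a)+F(b)]$.

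Next I would insert these values into \eqref{mr-eq1}. The summand $\frac{1}{6}M(a,b)$ becomes $\frac{F^2(a)+F^2(b)}{6}$, the summand $\frac{1}{3}N(a,b)$ becomes $\frac{2F(a)F(b)}{3}$, and the coefficient $S(a,b)\frac{c}{12}\|\frac{b-a}{ab}\|^2 B$ collapses to $\frac{c}{6}\|\frac{b-a}{ab}\|^2[F(a)+F(b)]B$, the factor $2$ from $S(a,b)$ reducing the denominator $12$ to $6$. The quartic error term $\frac{c^2}{30}\|\frac{b-a}{ab}\|^4 B$ is left unchanged, since it does not depend on $F$ or $G$ individually. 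Assembling these four pieces reproduces exactly the left-hand side of the asserted inclusion, while the right-hand side is the integral of \eqref{mr-eq1} with $G=F$, that is $\frac{ab}{b-a}\int_a^b \frac{F(x)F(\frac{abx}{(a+b)x-ab})}{x^2}\,dx$.

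I do not anticipate any genuine obstacle: the entire content is the arithmetic simplification of the coefficients, and no new estimate, integration, or set-inclusion argument beyond Theorem~\ref{mr-th2} is required. The only point demanding a little care is tracking the unit-ball factor $B$ consistently across all terms, since the displayed statement of the corollary appears to suppress it on the last two summands. In writing the final inclusion I would therefore keep $B$ attached to each translate so that the Minkowski sum on the left is well defined.
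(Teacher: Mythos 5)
Your proposal is correct and is exactly the paper's approach: the paper proves this corollary by the single remark ``If $F=G$ in Theorem~\ref{mr-th2}, then it reduces to the following result,'' which is precisely your specialization $G=F$ with $M(a,b)=F^2(a)+F^2(b)$, $N(a,b)=2F(a)F(b)$, $S(a,b)=2[F(a)+F(b)]$. Your observation that the unit ball $B$ should remain attached to the last two summands is a valid catch of a typographical omission in the paper's statement, not a deviation in method.
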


\begin{theorem}
	Let $F,G: D\subset X \rightarrow n(Y)$ be two strongly harmonic convex set-valued functions with modulus $c>0$. If $F,G$ are Aumann integrable on $[a,b]$, then:
	\[\frac{1}{6}M(a,b)+\frac{1}{3}N(a,b)+S(a,b) \frac{c}{12}\left\|\frac{b-a}{ab}\right\|^{2}B + \frac{c^2}{30}\left\|\frac{b-a}{ab}\right\|^{4}B\subset \frac{ab}{b-a}\int_{a}^{b}\frac{F(x)G(x)}{x^2}dx,\]
	where $M(a,b), N(a,b)$ and $S(a,b)$ are given by theorem \ref{mr-th2}.
\end{theorem}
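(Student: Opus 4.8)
The plan is to reduce this statement to a direct computation in the spirit of the proof of Theorem \ref{mr-th2}, the only structural difference being that here both factors are evaluated at the \emph{same} harmonic argument rather than at harmonically reflected points. First I would record the substitution already used in the earlier integral theorems: writing $\frac{1}{x}=\frac{t}{b}+\frac{1-t}{a}$, equivalently $x=\frac{ab}{ta+(1-t)b}$, one has $\frac{dx}{x^{2}}=\frac{b-a}{ab}\,dt$, and $x$ ranges over $[a,b]$ as $t$ ranges over $[0,1]$. Combining this change of variables with Aumann's definition (exactly as in the identity $\int_{1/b}^{1/a}F(1/t)\,dt=\int_a^b F(x)/x^2\,dx$ used before) yields
\[
\frac{ab}{b-a}\int_{a}^{b}\frac{F(x)G(x)}{x^{2}}\,dx=\int_{0}^{1}F\!\left(\frac{ab}{ta+(1-t)b}\right)G\!\left(\frac{ab}{ta+(1-t)b}\right)dt .
\]

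Next I would apply the defining inclusion of strong harmonic convexity to each factor, taking the endpoints $a,b$. Setting $\alpha=\left\|\frac{b-a}{ab}\right\|^{2}$, Definition (\ref{Def1}) gives
\[
F\!\left(\frac{ab}{ta+(1-t)b}\right)\supseteq tF(b)+(1-t)F(a)+ct(1-t)\,\alpha B,
\]
together with the analogous inclusion for $G$. Since the Minkowski product is monotone with respect to inclusion (if $P\subseteq P'$ and $Q\subseteq Q'$ then $PQ\subseteq P'Q'$) and the Aumann integral is likewise monotone, multiplying the two lower bounds and integrating over $[0,1]$ produces an inclusion of the form $[\,\text{expression}\,]\subseteq\frac{ab}{b-a}\int_a^b \frac{F(x)G(x)}{x^2}\,dx$, which is the direction asserted in the theorem.

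The remaining work is to expand the product of the two three-term sets into nine pieces and integrate term by term, using $\int_0^1 t^2\,dt=\int_0^1(1-t)^2\,dt=\tfrac13$, $\int_0^1 t(1-t)\,dt=\tfrac16$, $\int_0^1 t^2(1-t)\,dt=\int_0^1 t(1-t)^2\,dt=\tfrac{1}{12}$, and $\int_0^1 t^2(1-t)^2\,dt=\tfrac{1}{30}$. The four deterministic terms assemble into a combination of $M(a,b)$ and $N(a,b)$; the four mixed terms each carrying one factor $ct(1-t)\alpha B$ collapse, via $\int_0^1 t^2(1-t)=\int_0^1 t(1-t)^2=\tfrac1{12}$, into $S(a,b)\,\tfrac{c}{12}\alpha B$; and the single term with two copies of $ct(1-t)\alpha B$ gives $\tfrac{c^2}{30}\alpha^2 B=\tfrac{c^2}{30}\left\|\frac{b-a}{ab}\right\|^{4}B$, matching the last two summands of the claim.

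The main obstacle, and the step to be executed with the greatest care, is precisely this bookkeeping over the set-valued (Minkowski) arithmetic: one must justify that $B\cdot B\subseteq B$, handle the products $F(a)\cdot B$ and $G(a)\cdot B$ correctly, keep every inclusion pointing the same way through both the product and the Aumann integral, and—most delicately—track which beta-integral multiplies which product. Because here both $F$ and $G$ are evaluated at $\frac{ab}{ta+(1-t)b}$ with \emph{no} harmonic reflection (in contrast with Theorem \ref{mr-th2}), the direct expansion attaches the weights $(1-t)^2$ and $t^2$ to the \emph{diagonal} products $F(a)G(a),F(b)G(b)$ and the weight $t(1-t)$ to the \emph{cross} products $F(a)G(b),F(b)G(a)$; this is the transpose of the pairing in the reflected case, so the coefficients of $M$ and $N$ are exactly where an index slip would occur, and confirming that they recombine into the stated $\tfrac16 M(a,b)+\tfrac13 N(a,b)$ is the crux of the verification.
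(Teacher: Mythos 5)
Your setup is exactly the paper's: the same substitution $x=\frac{ab}{ta+(1-t)b}$ reducing the weighted Aumann integral to $\int_{0}^{1}F\left(\frac{ab}{ta+(1-t)b}\right)G\left(\frac{ab}{ta+(1-t)b}\right)dt$, the same application of the defining inclusion to each factor, and the same beta integrals. The problem is the last step, which you yourself flag as the crux: the terms do \emph{not} recombine into the stated $\frac{1}{6}M(a,b)+\frac{1}{3}N(a,b)$. As you correctly observe, with both factors at the same (unreflected) argument the diagonal products $F(a)G(a)$, $F(b)G(b)$ carry the weights $\int_{0}^{1}(1-t)^{2}dt=\int_{0}^{1}t^{2}dt=\frac{1}{3}$, while the cross products $F(a)G(b)$, $F(b)G(a)$ carry $\int_{0}^{1}t(1-t)\,dt=\frac{1}{6}$; hence the expansion yields
\[
\frac{1}{3}M(a,b)+\frac{1}{6}N(a,b)+S(a,b)\,\frac{c}{12}\left\|\frac{b-a}{ab}\right\|^{2}B+\frac{c^{2}}{30}\left\|\frac{b-a}{ab}\right\|^{4}B,
\]
i.e.\ the \emph{transpose} of the stated coefficients on $M$ and $N$. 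Since $M(a,b)$ and $N(a,b)$ are in general distinct sets, no recombination can interchange $\frac{1}{3}$ and $\frac{1}{6}$, so your argument as written proves a different inclusion from the one asserted, and the step you defer ("confirming that they recombine") is precisely the step that fails.

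You should know this is a defect of the statement rather than of your method: the paper's own proof hits the same wall. Its penultimate displayed line reads $\frac{1}{3}[F(a)G(a)+F(b)G(b)]+\frac{1}{6}[F(a)G(b)+F(b)G(a)]+\cdots$, i.e.\ $\frac{1}{3}M+\frac{1}{6}N+\cdots$, and the final line then silently swaps these coefficients to $\frac{1}{6}M+\frac{1}{3}N$ to match the statement --- an unjustified (and generally false) equality. The stated coefficients were evidently carried over from Theorem \ref{mr-th2}, where the harmonic reflection in the argument of $G$ genuinely transposes the pairing and makes $\frac{1}{6}M+\frac{1}{3}N$ correct. The honest conclusion of your computation (and of the paper's) is the inclusion with $\frac{1}{3}M(a,b)+\frac{1}{6}N(a,b)$ on the left-hand side; to complete a correct proof you should state and prove that corrected inclusion rather than attempt a recombination that cannot hold.
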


\begin{proof}
	Let $F,G$ be two strongly harmonic convex set-valued functions with modulus $c>0$. Then:
	\begin{eqnarray*}
		&& \frac{ab}{b-a}\int_{a}^{b}\frac{F(x)G(x)}{x^2}dx\\ 
		&=& \int_{0}^{1}F\left(\frac{ab}{ta+(1-t)b}\right)G\left(\frac{ab}{ta+(1-t)b}\right)dt\\
		&\supseteq&  \int_{0}^{1} \left[tF(b)+(1-t)F(a)+ct(1-t)\left\|\frac{b-a}{ab}\right\|^{2}B\right]\left[tG(b)+(1-t)G(a)+ct(1-t)\left\|\frac{b-a}{ab}\right\|^{2}B\right]dt\\
		&=& F(a)G(a)\int_{0}^{1}(1-t)^{2}dt+ [F(a)G(b)+F(b)G(a)]\int_{0}^{1}t(1-t)dt\\ 
		&& + F(b)G(b)\int_{0}^{1}t^{2}dt+ c\left\|\frac{b-a}{ab}\right\|^{2}B[F(a)+G(a)]\int_{0}^{1}t(1-t)^{2}dt\\
		&& + c\left\|\frac{b-a}{ab}\right\|^{2}B[F(b)+G(b)]\int_{0}^{1}t^{2}(1-t)dt + c^{2}\left\|\frac{b-a}{ab}\right\|^{4}B\int_{0}^{1}t^{2}(1-t)^{2}dt\\
		&=& \frac{1}{3}[F(a)G(a)+F(b)G(b)]+\frac{1}{6}[F(a)G(b)+F(b)G(a)] +\frac{c}{12}[F(a)+F(b)+G(a)+G(b)]\left\|\frac{b-a}{ab}\right\|^{2}\\
		&& +\frac{c^{2}}{30}\left\|\frac{b-a}{ab}\right\|^{4}B\\
		&=& \frac{1}{6}M(a,b)+\frac{1}{3}N(a,b)+S(a,b) \frac{c}{12}\left\|\frac{b-a}{ab}\right\|^{2}B + \frac{c^2}{30}\left\|\frac{b-a}{ab}\right\|^{4}B,
	\end{eqnarray*} 
which is the required result.
\end{proof}	

If $F=G$ in previously Theorem, then it reduces to the following result.

\begin{corollary}
	Let $F:D\rightarrow n(y)$ strongly harmonic convex set-valued function with modulus $c>0$. If $F$ is Aumann integrable, then:
	\[\frac{F^2(a)+F^2(b)+F(a)+F(b)}{3}+\frac{c}{6}\left\|\frac{b-a}{ab}\right\|^{2}B[F(a)+F(b)]+\frac{c^2}{30}\left\|\frac{b-a}{ab}\right\|^{4}B \subset \frac{ab}{b-a}\int_{a}^{b}\frac{F^2(x)}{x^2}dx.\]
\end{corollary}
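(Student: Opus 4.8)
The plan is to obtain this corollary as an immediate specialization of the preceding theorem, taking $G=F$. Since the inclusion there holds for any pair of strongly harmonic convex set-valued functions sharing the same modulus $c>0$, and $F$ is itself such a function, one may substitute $G=F$ directly and then simplify the auxiliary quantities $M(a,b)$, $N(a,b)$ and $S(a,b)$ defined in Theorem \ref{mr-th2}.

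First I would record the effect of the substitution on these three expressions: with $G=F$ one has $M(a,b)=F^{2}(a)+F^{2}(b)$, $N(a,b)=2F(a)F(b)$, and $S(a,b)=2\bigl(F(a)+F(b)\bigr)$. Substituting these into the left-hand side $\frac{1}{6}M(a,b)+\frac{1}{3}N(a,b)+S(a,b)\frac{c}{12}\left\|\frac{b-a}{ab}\right\|^{2}B+\frac{c^{2}}{30}\left\|\frac{b-a}{ab}\right\|^{4}B$ and collecting like terms is the whole content of the argument: the quadratic part becomes $\frac{1}{6}\bigl(F^{2}(a)+F^{2}(b)\bigr)+\frac{2}{3}F(a)F(b)$, the factor $S(a,b)\frac{c}{12}=2(F(a)+F(b))\frac{c}{12}$ collapses to $\frac{c}{6}(F(a)+F(b))$, and the term $\frac{c^{2}}{30}\left\|\frac{b-a}{ab}\right\|^{4}B$ is unchanged. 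On the right, the integrand $F(x)G(x)/x^{2}$ becomes $F^{2}(x)/x^{2}$, producing exactly the integral in the claim.

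There is no genuine obstacle here: the result is a substitution followed by elementary coefficient bookkeeping, and every set-theoretic inclusion needed is already supplied by the preceding theorem. The only points demanding a little care are (i) keeping the Minkowski sum and product operations on subsets of $Y$ consistent under $G=F$, reading $F^{2}(x)$ as the product set $F(x)F(x)$; and (ii) checking that the numerical coefficients of $F^{2}(a)+F^{2}(b)$ and of $F(a)F(b)$ are recombined correctly when matching the collected expression against the stated left-hand side. Alternatively, one could bypass the reduction and argue directly, as in the preceding theorem: use the change of variable $x=\frac{ab}{ta+(1-t)b}$ to rewrite $\frac{ab}{b-a}\int_{a}^{b}\frac{F^{2}(x)}{x^{2}}\,dx$ as $\int_{0}^{1}F\!\left(\frac{ab}{ta+(1-t)b}\right)^{2}dt$, insert the strong harmonic convexity inclusion $tF(b)+(1-t)F(a)+ct(1-t)\left\|\frac{b-a}{ab}\right\|^{2}B\subseteq F\!\left(\frac{ab}{ta+(1-t)b}\right)$ into both factors, expand the product, and integrate the elementary weights $\int_{0}^{1}t^{2}\,dt$, $\int_{0}^{1}(1-t)^{2}\,dt$, $\int_{0}^{1}t(1-t)\,dt$, $\int_{0}^{1}t^{2}(1-t)\,dt$ and $\int_{0}^{1}t^{2}(1-t)^{2}\,dt$. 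The $G=F$ reduction is the shorter route, and it is the one I would present.
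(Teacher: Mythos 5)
Your route is the same as the paper's: the paper's entire proof of this corollary is the remark that setting $G=F$ in the preceding theorem yields the result. However, the ``coefficient bookkeeping'' you defer to the end does not actually close, and this is a real gap in your proposal. Substituting $G=F$ into the preceding theorem \emph{as stated} gives, exactly as you compute, the quadratic part $\frac{1}{6}\bigl(F^{2}(a)+F^{2}(b)\bigr)+\frac{2}{3}F(a)F(b)$; but the corollary's left-hand side has $\frac{1}{3}\bigl(F^{2}(a)+F^{2}(b)+F(a)+F(b)\bigr)$, and no collecting of terms reconciles the two (test with singletons $F(a)=\{2\}$, $F(b)=\{0\}$: the first equals $\{2/3\}$, the second $\{2\}$). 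The $c$ and $c^{2}$ terms do match; only the quadratic part fails.

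The mismatch is not your arithmetic but an inconsistency in the paper that a careful execution of your own plan would have exposed. The statement of the theorem immediately preceding the corollary recycles the weights $\frac{1}{6}M(a,b)+\frac{1}{3}N(a,b)$ from Theorem \ref{mr-th2}, but those weights are wrong there: in Theorem \ref{mr-th2} the factor $G$ is evaluated at the harmonically reflected point $\frac{ab}{(1-t)a+tb}$, so the diagonal products $F(a)G(a)$, $F(b)G(b)$ carry weight $\int_{0}^{1}t(1-t)\,dt=\frac{1}{6}$, whereas in the preceding theorem both factors share the argument $\frac{ab}{ta+(1-t)b}$, so the diagonal products carry weight $\int_{0}^{1}t^{2}\,dt=\int_{0}^{1}(1-t)^{2}\,dt=\frac{1}{3}$. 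The correct inclusion --- and the one that theorem's own proof actually derives before its final line --- is $\frac{1}{3}M(a,b)+\frac{1}{6}N(a,b)+\cdots$, and with $G=F$ this gives $\frac{1}{3}\bigl(F^{2}(a)+F^{2}(b)\bigr)+\frac{1}{3}F(a)F(b)$, which matches the corollary provided one reads the term $F(a)+F(b)$ in its first numerator as a typo for $F(a)F(b)$. So the second, direct route you sketch (change of variable, insert the strong harmonic convexity inclusion into both factors, integrate the weights) is the one that actually proves the corollary; the shortcut through the theorem's printed statement, which you chose to present, substitutes into an incorrectly stated inclusion and lands on a different expression.
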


 \end{document}